\begin{document}

\title*{Complex-valued fractional derivatives\\
on time scales\thanks{This is a preprint of a paper 
whose final and definite form will appear in
\emph{Springer Proceedings in Mathematics \& Statistics},
ISSN: 2194-1009. Accepted for publication 06/Nov/2015.}}


\author{Benaoumeur Bayour and Delfim F. M. Torres}

\authorrunning{B. Bayour and D. F. M. Torres}


\institute{Benaoumeur Bayour
\at University of Chlef, B.P. 151, Hay Es-salem Chlef, Algeria,
\email{b.benaoumeur@gmail.com}
\and
Delfim F. M. Torres \Letter
\at University of Aveiro,
Center for Research and Development in Mathematics and Applications (CIDMA),
Department of Mathematics, 3810-193 Aveiro, Portugal,
\email{delfim@ua.pt}}


\maketitle


\abstract{We introduce a notion of fractional (noninteger order)
derivative on an arbitrary nonempty closed subset of the real numbers
(on a time scale). Main properties of the new operator are proved
and several illustrative examples given.}

\keywords{fractional calculus; calculus on time scales;
complex-valued operator; Hilger derivative of noninteger order.}

\ \

\noindent{\em 2010 Mathematics Subject Classification\/}: 26A33; 26E70.


\section{Introduction}

The study of fractional (noninteger) order derivatives
on discrete, continuous and, more generally,
on an arbitrary nonempty closed set (i.e., a time scale)
is a well-known subject under strong current development.
The subject is very rich and several different definitions
and approaches are available, either in discrete \cite{MR3362702},
continuous \cite{MR3353441}, and time-scale settings \cite{NBD}.
In continuous time, i.e., for the time scale $\mathbb{T}=\mathbb{R}$,
several definitions are based on the classical Euler Gamma function $\Gamma$.
For the time scale $\mathbb{T}=\mathbb{Z}$, the Gamma function is nothing
else than the factorial, while for the $q$-scale one has
the $q$-Gamma function $\Gamma_q$ \cite{MR1865777}.
For the definition of Gamma function on an arbitrary time scale $\mathbb{T}$
see \cite{MR3136530}. Similarly to \cite{NBD,MyID:320},
here we introduce a new notion of fractional derivative
on an arbitrary time scale $\mathbb{T}$ that does not
involve Gamma functions. Our approach is, however, different from the ones
available in the literature \cite{NBD,MyID:320,MyID:328,MyID:324}.
In particular, while in \cite{NBD,MyID:320,MyID:328,MyID:324}
the fractional derivative at a point is always a real number,
here, in contrast, the fractional derivative at a point is,
in general, a complex number. For example, the derivative
of order $\alpha \in (0, 1]$ of the square function $t^2$
is always given by $t^\alpha + (\sigma(t))^\alpha$, where
$\sigma(t)$ is the forward jump operator of the time scale,
which is in general a complex number (e.g., for $\alpha = 1/2$ and $t < 0$)
and a generalization of the Hilger derivative $(t^2)^\Delta = t + \sigma(t)$.

The text is organized as follows. In Section~\ref{sec:2} we recall
the notion of Hilger/delta derivative. Our complex-valued fractional
derivative on time scales is introduced in Section~\ref{sec:3},
where its main properties are proved and several examples given.


\section{Preliminaries}
\label{sec:2}

A time scale $\mathbb{T}$ is an arbitrary nonempty
closed subset of the real numbers $\mathbb{R}$.
For $t\in\mathbb{T}$, we define the forward jump operator
$\sigma:\mathbb{T}\rightarrow\mathbb{T}$
by $\sigma(t)=\inf\{s\in\mathbb{T}:s>t\}$
and the backward jump operator $\rho:\mathbb{T}\rightarrow\mathbb{T}$
is defined by $\rho(t):=\sup\{s\in\mathbb{T}:s<t\}$.
Then, one defines the graininess function
$\mu:\mathbb{T}\rightarrow[0,+\infty[$
by $\mu(t)= \sigma(t)-t$.

If $\sigma(t)>t$, then we say that $t$ is right-scattered;
if $\rho(t)<t $, then $t$ is left-scattered. Moreover,
if $t < \sup\mathbb{T}$ and $\sigma(t)=t$, then $t$ is called right-dense;
if $t>\inf\mathbb{T}$ and $\rho(t)=t$, then $t$ is called left-dense.
If $\mathbb{T}$ has a left-scattered maximum $m$, then we define
$\mathbb{T}^{\kappa}=\mathbb{T}\setminus\{ m\}$; otherwise
$\mathbb{T}^{\kappa}=\mathbb{T}$.
If $f:\mathbb{T}\rightarrow \mathbb{R}$, then
$f^{\sigma}:\mathbb{T}\rightarrow \mathbb{R}$ is given by
$f^{\sigma}(t)=f(\sigma(t))$ for all $t\in\mathbb{T}$.

\begin{definition}[The Hilger derivative \cite{MBP}]
Let $f:\mathbb{T}\rightarrow \mathbb{R}$ and $t\in\mathbb{T}$.
We define $f^{\Delta}(t)$ to be the number
(provided it exists) with the property that given any
$\epsilon>0$ there is a neighborhood $U$ of $t$
(i.e., $U=(t-\delta,t+\delta)\cap\mathbb{T}$ for some $\delta>0$) such that
$$
|[f(\sigma(t))-f(s)]-f^{\Delta}(t)[\sigma(t)-s]|\leq\epsilon|\sigma(t)- s|
$$
for all $s\in U$. We call $f^{\Delta}(t)$ the Hilger (or delta)
derivative of $f$ at $t$.
\end{definition}
For more on the calculus on time scales
we refer the reader to the books \cite{MR1843232,MBP}.


\section{Complex-valued fractional derivatives on time scales}
\label{sec:3}

Let $f:\mathbb{T}\rightarrow \mathbb{R}$ with $\mathbb{T}$
a given time scale. We introduce here a new definition
of fractional (noninteger) delta derivative of order
$\alpha \in (0,1]$ at a point $t\in \mathbb{T}^{\kappa}$.

\begin{definition}[The delta fractional derivative of order $\alpha$]
Assume $f:\mathbb{T}\rightarrow \mathbb{R}$ with
$\mathbb{T}$ a time scale. Let $t\in \mathbb{T}^{\kappa}$
and $\alpha \in (0,1]$. We define $f^{\Delta^{\alpha}}(t)$
to be the number (provided it exists) with the property that given
any $\epsilon >0$ there is a neighborhood $U$ of $t$
(i.e., $U=(t-\delta,t+\delta)\cap \mathbb{T}$
for some $\delta>0$) such that
\begin{equation}
\label{eq:nova:der}
\left|[f^{\alpha}(\sigma(t))-f^{\alpha}(s)]
-f^{\Delta^{\alpha}}(t)[\sigma(t)^{\alpha}-s^{\alpha}]\right|
\leq \epsilon \left|\sigma(t)^{\alpha}-s^{\alpha}\right|
\end{equation}
for all $s\in U$. We call $f^{\Delta^{\alpha}}(t)$
the delta derivative of order $\alpha$ of $f$ at $t$ or
the delta fractional (noninteger order) derivative of $f$ at $t$.
Moreover, we say that $f$ is delta differentiable of order $\alpha$ on
$\mathbb{T}^{\kappa}$ provided $f^{\Delta^{\alpha}}(t)$ exists for all
$t\in\mathbb{T}^{\kappa}$. Function $f^{\Delta^{\alpha}}
:\mathbb{T}^{\kappa}\rightarrow \mathbb{C}$ is then called the
delta derivative of order $\alpha$ of $f$ on $\mathbb{T}^{\kappa}$.
\end{definition}

\begin{remark}
In (\ref{eq:nova:der}) we use $f^{\alpha}$ to denote the power $\alpha$ of $f$.
It is clear that the new derivative coincides with the standard Hilger
derivative in the integer order case $\alpha = 1$. Differently from $\alpha = 1$,
in general $f^{\Delta^{\alpha}}(t)$ is a complex number.
\end{remark}

\begin{theorem}
\label{th1}
Assume $f:\mathbb{T}\rightarrow \mathbb{R}$ with $\mathbb{T}$
a time scale. Let $t\in \mathbb{T}^{\kappa}$ and $\alpha\in\mathbb{R}$.
Then the following proprieties hold:
\begin{enumerate}
\item\label{2} If $f$ is continuous at $t$ and $t$ is right-scattered,
then $f$ is delta differentiable of order $\alpha$ at $t$ with
\begin{equation}
\label{eq1}
f^{\Delta^{\alpha}}(t)
=\frac{f^{\alpha}(\sigma(t))-f^{\alpha}(t)}{\sigma^{\alpha}(t)-t^{\alpha}}.
\end{equation}

\item \label{3} If $t$ is right-dense, then $f$ is delta differentiable
of order $\alpha$ at $t$ if only if the limit
$$
\lim_{s\rightarrow t}\frac{f^{\alpha}(t)-f^{\alpha}(s)}{t^{\alpha}-s^{\alpha}}
$$
exists as a finite number. In this case
\begin{equation}
\label{eq2}
f^{\Delta^{\alpha}}(t)
=\lim_{s\rightarrow t}
\frac{f^{\alpha}(t)-f^{\alpha}(s)}{t^{\alpha}-s^{\alpha}}.
\end{equation}

\item If $f$ is delta differentiable of order $\alpha$ at $t$, then
$$
f^{\alpha}(\sigma(t))
=f^{\alpha}(t)+(\sigma(t)^{\alpha}-t^{\alpha})f^{\Delta^{\alpha}}(t).
$$
\end{enumerate}
\end{theorem}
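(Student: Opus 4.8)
The plan is to transcribe the classical structure theorem for the Hilger derivative (see \cite{MBP}) into the fractional setting, exploiting the fact that the defining inequality~\eqref{eq:nova:der} is formally the Hilger derivative condition with the function $f$ replaced by its power $f^{\alpha}$ and the independent variable replaced by $(\cdot)^{\alpha}$. Accordingly I would treat the three parts by the usual dichotomy between right-scattered and right-dense points, reusing throughout the elementary observation that $s^{\alpha}\to t^{\alpha}$ as $s\to t$ by continuity of the power function.

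For part~\ref{2}, assume $t$ is right-scattered and $f$ is continuous at $t$, and take the candidate value to be the quotient on the right-hand side of~\eqref{eq1}. First I would note that, since $t<\sigma(t)$, the denominator $\sigma^{\alpha}(t)-s^{\alpha}$ stays bounded away from $0$ for $s$ in a small enough neighborhood $U$ of $t$, and that continuity of $f$ at $t$ carries over to $f^{\alpha}$. Then, given $\epsilon>0$, continuity yields a $U$ on which $\left|\frac{f^{\alpha}(\sigma(t))-f^{\alpha}(s)}{\sigma^{\alpha}(t)-s^{\alpha}}-\frac{f^{\alpha}(\sigma(t))-f^{\alpha}(t)}{\sigma^{\alpha}(t)-t^{\alpha}}\right|\le\epsilon$; multiplying through by $\left|\sigma^{\alpha}(t)-s^{\alpha}\right|$ recovers exactly~\eqref{eq:nova:der} with $f^{\Delta^{\alpha}}(t)$ equal to the asserted quotient, proving both existence and the formula~\eqref{eq1}.

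For part~\ref{3}, right-density gives $\sigma(t)=t$, hence $\sigma(t)^{\alpha}=t^{\alpha}$, and~\eqref{eq:nova:der} reduces to $\left|[f^{\alpha}(t)-f^{\alpha}(s)]-f^{\Delta^{\alpha}}(t)[t^{\alpha}-s^{\alpha}]\right|\le\epsilon\left|t^{\alpha}-s^{\alpha}\right|$. Dividing by $\left|t^{\alpha}-s^{\alpha}\right|$ for $s\ne t$ in $U$ shows this is equivalent to the difference quotient in~\eqref{eq2} being within $\epsilon$ of $f^{\Delta^{\alpha}}(t)$; since $\epsilon$ is arbitrary, the existence of $f^{\Delta^{\alpha}}(t)$ is equivalent to the existence of the limit, which then equals it. For the last property I would split on $t$ once more: if $t$ is right-dense both sides equal $f^{\alpha}(t)$ and the identity is immediate, whereas if $t$ is right-scattered I would substitute $s=t$ into~\eqref{eq:nova:der}, which is licit because $t\in U$; the resulting bound $\left|f^{\alpha}(\sigma(t))-f^{\alpha}(t)-f^{\Delta^{\alpha}}(t)[\sigma(t)^{\alpha}-t^{\alpha}]\right|\le\epsilon\left|\sigma(t)^{\alpha}-t^{\alpha}\right|$ holds for every $\epsilon>0$, forcing the left-hand side to vanish and yielding the claimed formula.

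The main obstacle, and the only place where the argument departs from the real-valued classical proof, is the behaviour of the (possibly complex) power function. I expect the delicate points to be confirming that $t\mapsto t^{\alpha}$ and $f^{\alpha}$ are continuous where the above limits are taken, and that $\sigma^{\alpha}(t)\neq t^{\alpha}$ at right-scattered $t$ so that the quotient~\eqref{eq1} is well defined and the denominators never vanish. Once the chosen branch of $(\cdot)^{\alpha}$ is fixed so that these facts hold, the remaining manipulations are routine transcriptions of the Hilger case.
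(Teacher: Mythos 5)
Your proposal is correct and, for parts~\ref{2} and~\ref{3}, follows essentially the same route as the paper: part~\ref{2} by passing the continuity limit of the difference quotient through $\left|\sigma^{\alpha}(t)-s^{\alpha}\right|$ to recover \eqref{eq:nova:der}, and part~\ref{3} by setting $\sigma(t)=t$ in \eqref{eq:nova:der} and dividing, arguing both implications. The one place you genuinely diverge is the third item at a right-scattered point: the paper writes the trivial algebraic identity $f^{\alpha}(\sigma(t))=f^{\alpha}(t)+(\sigma^{\alpha}(t)-t^{\alpha})\frac{f^{\alpha}(\sigma(t))-f^{\alpha}(t)}{\sigma^{\alpha}(t)-t^{\alpha}}$ and then invokes item~\ref{2} to identify the quotient with $f^{\Delta^{\alpha}}(t)$, whereas you substitute $s=t$ directly into \eqref{eq:nova:der} and let $\epsilon\to 0$. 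Your version is the classical Bohner--Peterson argument and is actually preferable here, since item~\ref{2} carries a continuity hypothesis that the third item does not assume; the direct substitution needs only delta differentiability of order $\alpha$ at $t$. Your closing caveat about fixing a branch of $(\cdot)^{\alpha}$ so that $\sigma^{\alpha}(t)\neq t^{\alpha}$ at right-scattered points is a legitimate point that the paper leaves implicit, not a gap in your argument.
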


\begin{proof}
\begin{enumerate}
\item  Assume $f$ is continuous at $t$ and $t$ is right scattered.
By continuity,
$$
\lim_{s\rightarrow t}\frac{f^{\alpha}(\sigma(t))
-f^{\alpha}(s)}{\sigma^{\alpha}(t)-s^{\alpha}}
=\frac{f^{\alpha}(\sigma(t))-f^{\alpha}(t)}{\sigma^{\alpha}(t)
-t^{\alpha}}.
$$
Hence, given $\epsilon>0$, there is a neighborhood $U$ of $t$
such that
$$
\left|\frac{f^{\alpha}(\sigma(t))-f^{\alpha}(s)}{\sigma^{\alpha}(t)-s^{\alpha}}
-\frac{f^{\alpha}(\sigma(t))
-f^{\alpha}(t)}{\sigma^{\alpha}(t)-t^{\alpha}}\right|
\leq\epsilon
$$
for all $s\in U.$ It follows that
$$
\left| f^{\alpha}(\sigma(t))-f^{\alpha}(s)-\frac{f^{\alpha}(\sigma(t))
-f^{\alpha}(t)}{\sigma^{\alpha}(t)-t^{\alpha}}
[\sigma^{\alpha}(t)-s^{\alpha}]\right|\leq\epsilon
\left|\sigma^{\alpha}(t)-s^{\alpha}\right|
$$
for all $s\in U$. Hence, we get the desired result \eqref{eq1}.

\item Assume $f$ is differentiable at $t$ and $t$ is right-dense.
Let $\epsilon>0$ be given. Since $f$ is differentiable at $t$,
there is a neighborhood $U$ of $t$ such that
$$
\mid[f^{\alpha}(\sigma(t))-f^{\alpha}(t)] -f^{\Delta^{\alpha}}(t)[
\sigma^{\alpha}(t)-s^{\alpha} ]\mid
\leq\epsilon\mid\sigma^{\alpha}(t)-s^{\alpha} \mid
$$
for all $s\in U$. Since $\sigma(t)=t$, we have that
$$
\mid[f^{\alpha}(\sigma(t))-f^{\alpha}(t)] -f^{\Delta^{\alpha}}(t)[t^{\alpha}
-s^{\alpha}]\mid \leq\epsilon\mid\sigma^{\alpha}(t)-s^{\alpha} \mid
$$
for all $s\in U$. It follows that
$
\left|\frac{f^{\alpha}(t) -f^{\alpha}(s)}{t^{\alpha}
-s^{\alpha}}-f^{\Delta^{\alpha}}(t) \right|
\leq\epsilon
$
for all $s\in U$, $s\neq t$, and we get the desired
equality \eqref{eq2}. Assume
$
\lim_{s\rightarrow t}
\frac{f^{\alpha}(t)-f^{\alpha}(s)}{t^{\alpha}-s^{\alpha}}
$
exists and is equal to $X$ and $\sigma(t)=t$. Let $\epsilon>0$.
Then there is a neighborhood $U$ of $t$ such that
$$
\left|\frac{f^{\alpha}(\sigma(t))
-f^{\alpha}(s)}{t^{\alpha}-s^{\alpha}}-X\right|
\leq \epsilon
$$
for all $s\in U$. Because
$\mid f^{\alpha}(\sigma(t))
-f^{\alpha}(s)-X(t^{\alpha}-s^{\alpha})\mid
\leq \epsilon |t^{\alpha}-s^{\alpha}|$
for all $s\in U$,
$$
f^{\Delta^{\alpha}}(t)=X
=\lim_{s\rightarrow t} \frac{f^{\alpha}(t)
-f^{\alpha}(s)}{t^{\alpha}-s^{\alpha}}.
$$
\item \label{4}
If $\sigma(t)=t$, then $\sigma^{\alpha}(t)-t^{\alpha}=0$ and
$$
f^{\alpha}(\sigma(t))=f^{\alpha}(t)
=f^{\alpha}(t)+(\sigma^{\alpha}(t)
-t^{\alpha})f^{\Delta^{\alpha}}(t).
$$
On the other hand, if $\sigma(t)>t$, then by item \ref{2}
\begin{eqnarray*}
f^{\alpha}(\sigma(t))&=&f^{\alpha}(t)+(\sigma^{\alpha}(t)-t^{\alpha})
\frac{f^{\alpha}(\sigma(t))-f^{\alpha}(t)}{\sigma(t)^{\alpha}-t^{\alpha}}\\
&=& f^{\alpha}(t)+(\sigma^{\alpha}(t)-t^{\alpha})f^{\Delta^{\alpha}}(t)
\end{eqnarray*}
and the proof is complete.
\end{enumerate}
\end{proof}

\begin{example}
If $\mathbb{T}=\mathbb{R}$, then \eqref{eq2}
yields that $f:\mathbb{R}\rightarrow \mathbb{R}$ is delta differentiable
of order $\alpha$ at $t\in \mathbb{R}$  if and only if
$f^{\Delta^\alpha}(t)=\lim_{s\rightarrow t}
\frac{f^{\alpha}(t)-f^{\alpha}(s)}{t^{\alpha}-s^{\alpha}}$ exists,
i.e., if and only if $f$ is fractional differentiable
at $t$. In this case we get the derivative $f^{(\alpha)}$
of \cite{KAK}.
\end{example}

\begin{example}
If $\mathbb{T}=\mathbb{Z}$, then item \ref{2} of Theorem~\ref{th1} yields
that $f:\mathbb{Z}\rightarrow \mathbb{R}$ is delta-differentiable of order
$\alpha$ at $t\in \mathbb{Z}$  with
$$
f^{\Delta^{\alpha}}(t)=\frac{f^{\alpha}(\sigma(t))
-f^{\alpha}(t)}{\sigma^{\alpha}(t)-t^{\alpha}}
=\frac{f^{\alpha}(t+1)-f^{\alpha}(t)}{(t+1)^{\alpha}-t^{\alpha}}.
$$
\end{example}

\begin{example}
\label{example3}
If $f:\mathbb{T}\rightarrow \mathbb{R}$ is defined
by $f(t)\equiv\lambda \in\mathbb{R}$,
then $f^{\Delta^{\alpha}}(t) \equiv 0$. Indeed,
if $t$ is right-scattered, then by item \ref{2} of Theorem~\ref{th1}
$f^{\Delta^{\alpha}}(t)
=\frac{f^{\alpha}(\sigma(t))-f^{\alpha}(t)}{\sigma^{\alpha}(t)-t^{\alpha}}
=\frac{\lambda^{\alpha}-\lambda^{\alpha}}{\sigma^{\alpha}(t)-t^{\alpha}}=0$;
if $t$ is right-dense, then by \eqref{eq2} we get
$f^{\Delta^{\alpha}}(t)=\lim_{s\rightarrow t}
\frac{\lambda^{\alpha}-\lambda^{\alpha}}{t^{\alpha}-s^{\alpha}}=0$.
\end{example}

\begin{example}
\label{ex1:1}
If $f:\mathbb{T}\rightarrow \mathbb{R}$, $t\mapsto t$,
then $f^{\Delta^{\alpha}} \equiv 1$ because
if $\sigma(t)>t$ (i.e., $t$ is right-scattered), then
$f^{\Delta^{\alpha}}(t)=\frac{f^{\alpha}(\sigma(t))
-f^{\alpha}(t)}{\sigma^{\alpha}(t)-t^{\alpha}}
=\frac{\sigma^{\alpha}(t)-t^{\alpha}}{\sigma^{\alpha}(t)-t^{\alpha}}=1$;
if $\sigma(t)=t$ (i.e., $t$ is right-dense), then
$f^{\Delta^{\alpha}}=\lim_{s\rightarrow t}\frac{f^{\alpha}(t)
-f^{\alpha}(s)}{t^{\alpha}-s^{\alpha}}
=\frac{t^{\alpha}-s^{\alpha}}{t^{\alpha}-s^{\alpha}}=1$.
\end{example}

\begin{example}
\label{ex1:2}
Let $g:\mathbb{T}\rightarrow \mathbb{R}$,
$t\mapsto \frac{1}{t}$. We have
$g^{\Delta^{\alpha}}(t)
= - \frac{1}{\left(t \sigma(t)\right)^{\alpha}}$.
Indeed, if $\sigma(t)=t$, then $g^{\Delta^{\alpha}}(t)=-\frac{1}{t^{2\alpha}}$;
if $\sigma(t)>t$, then
$$
g^{\Delta^{\alpha}}(t)
=\frac{g^{\alpha}(\sigma(t))-g^{\alpha}(t)}{\sigma^{\alpha}(t)-t^{\alpha}}
= \frac{\left(\frac{1}{\sigma(t)}\right)^{\alpha}
-\left(\frac{1}{t}\right)^{\alpha}}{\sigma^{\alpha}(t)-t^{\alpha}}
=\frac{\frac{t^{\alpha}-\sigma^{\alpha}(t)}{t^{\alpha}
\sigma^{\alpha}(t)}}{t^{\alpha}
-\sigma^{\alpha}(t)} = - \frac{1}{t^{\alpha}\sigma^{\alpha}(t)}.
$$
\end{example}

\begin{example}
\label{ex1:3}
Let $h:\mathbb{T}\rightarrow \mathbb{R}$, $t\mapsto t^{2}$.
We have $h^{\Delta^{\alpha}}(t)=\sigma^{\alpha}(t)+t^{\alpha}$.
Indeed, if $t$ is right-dense, then $h^{\Delta^{\alpha}}(t)=\lim_{s\rightarrow t}
\frac{t^{2\alpha}-s^{2\alpha}}{t^{\alpha}-s^{\alpha}}
= 2t^{\alpha}$; if $t$ is right-scattered, then
$$
h^{\Delta^{\alpha}}(t)=\frac{h^{\alpha}(\sigma(t))
-h^{\alpha}(t)}{\sigma^{\alpha}(t)-t^{\alpha}}=\frac{\sigma^{2\alpha}(t)
-t^{2\alpha}}{\sigma^{\alpha}(t)-t^{\alpha}}=\sigma^{\alpha}(t)+t^{\alpha}.
$$
\end{example}

\begin{example}
\label{ex7}
Consider the time scale $\mathbb{T}=h\mathbb{Z}$, $h > 0$.
Let $f$ be the function defined by
$f:h\mathbb{Z}\rightarrow \mathbb{R}$, $t\mapsto (t-c)^2$, $c\in\mathbb{R}$.
The fractional derivative of order $\alpha$ of $f$ at $t$ is
\begin{equation*}
\begin{split}
f^{\Delta^{\alpha}}(t)
&=\frac{f^{\alpha}(\sigma(t))-f^{\alpha}(t)}{\sigma^{\alpha}(t)-t^{\alpha}}
=\frac{((\sigma(t)-c)^{2})^{\alpha}
-((t-c)^{2})^{\alpha}}{\sigma^{\alpha}(t)-t^{\alpha}}\\
&= \frac{(t+h-c)^{2\alpha}-(t-c)^{2\alpha}}{(t+h)^{\alpha}-t^{\alpha}}.
\end{split}
\end{equation*}
\end{example}

\begin{remark}
Examples~\ref{ex1:2}, \ref{ex1:3} and \ref{ex7}
show that in general $f^{\Delta^{\alpha}}(t)$ is a complex number
(for instance, choose  $\alpha=\frac{1}{2}$ and $t<0$).
\end{remark}

\begin{theorem}
\label{th2}
Assume $f,g:\mathbb{T}\rightarrow\mathbb{R}$ are continuous
and delta differentiable of order $\alpha$ at $t\in \mathbb{T}^{\kappa}$.
Then the following proprieties hold:
\begin{enumerate}
\item For any constant $\lambda$, function
$\lambda f:\mathbb{T}\rightarrow \mathbb{R}$ is delta differentiable of order
$\alpha$ at $t$ with
$(\lambda f)^{\Delta^{\alpha}}=\lambda^{\alpha}f^{\Delta^{\alpha}}$.

\item The product $fg:\mathbb{T}\rightarrow \mathbb{R}$ is delta differentiable
of order $\alpha$ at $t$ with
$$
(fg)^{\Delta^{\alpha}}(t)=f^{\Delta^{\alpha}}(t)g^{\alpha}(t)
+f^{\alpha}(\sigma(t))g^{\Delta^{\alpha}}(t)
= f^{\Delta^{\alpha}}(t)g^{\alpha}(\sigma(t))
+f^{\alpha}(t)g^{\Delta^{\alpha}}(t).
$$

\item If $f(t)f(\sigma(t))\neq 0$, then $\frac{1}{f}$
is delta differentiable of order $\alpha$ at $t$ with
$$
\left(\frac{1}{f}\right)^{\Delta^{\alpha}}(t)
=\frac{-f^{\Delta^{\alpha}}(t)}{f^{\alpha}(\sigma(t))f^{\alpha}(t)}.
$$

\item If $g(t)g(\sigma(t))\neq 0$, then
$\frac{f}{g}$  is delta differentiable of order $\alpha$ at $t$ with
$$
\left(\frac{f}{g}\right)^{\Delta^{\alpha}}(t)
= \frac{f^{\Delta^{\alpha}}(t)g^{\alpha}(t)
-f^{\alpha}(t)g^{\Delta^{\alpha}}(t)}{g^{\alpha}(\sigma(t))g^{\alpha}(t)}.
$$
\end{enumerate}
\end{theorem}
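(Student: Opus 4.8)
The plan is to mirror the classical time-scale proofs (as in \cite{MBP}), the only algebraic input being the multiplicativity of the $\alpha$-th power, $(\lambda f)^{\alpha}=\lambda^{\alpha}f^{\alpha}$ and $(fg)^{\alpha}=f^{\alpha}g^{\alpha}$ (for a fixed branch of the power), together with the defining inequality \eqref{eq:nova:der} and the continuity of $f$ and $g$ at $t$. For item~1, I would feed $\lambda f$ into \eqref{eq:nova:der}: since $(\lambda f)^{\alpha}(\sigma(t))-(\lambda f)^{\alpha}(s)=\lambda^{\alpha}\bigl(f^{\alpha}(\sigma(t))-f^{\alpha}(s)\bigr)$, the whole expression inside the absolute value factors as $\lambda^{\alpha}$ times the corresponding expression for $f$ with candidate derivative $f^{\Delta^{\alpha}}(t)$. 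Replacing $\epsilon$ by $\epsilon/(|\lambda^{\alpha}|+1)$ in the estimate for $f$ then yields $(\lambda f)^{\Delta^{\alpha}}(t)=\lambda^{\alpha}f^{\Delta^{\alpha}}(t)$ at once.

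The product rule in item~2 is the technical core. I would start from the algebraic identity
\begin{equation*}
(fg)^{\alpha}(\sigma(t))-(fg)^{\alpha}(s)
=\bigl[f^{\alpha}(\sigma(t))-f^{\alpha}(s)\bigr]g^{\alpha}(s)
+f^{\alpha}(\sigma(t))\bigl[g^{\alpha}(\sigma(t))-g^{\alpha}(s)\bigr],
\end{equation*}
and substitute the two first-order expansions supplied by \eqref{eq:nova:der}, namely $f^{\alpha}(\sigma(t))-f^{\alpha}(s)=f^{\Delta^{\alpha}}(t)\,[\sigma(t)^{\alpha}-s^{\alpha}]+E_f$ and the analogue for $g$, where $|E_f|,|E_g|\le\epsilon\,|\sigma(t)^{\alpha}-s^{\alpha}|$ on a common neighbourhood $U$. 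Subtracting $\bigl(f^{\Delta^{\alpha}}(t)g^{\alpha}(t)+f^{\alpha}(\sigma(t))g^{\Delta^{\alpha}}(t)\bigr)[\sigma(t)^{\alpha}-s^{\alpha}]$ leaves three remainder terms: one proportional to $g^{\alpha}(s)-g^{\alpha}(t)$, which is small by continuity of $g$ at $t$, and two terms carrying the factors $E_f$ and $E_g$. Shrinking $U$ so that $|g^{\alpha}(s)-g^{\alpha}(t)|\le\epsilon$ and $|g^{\alpha}(s)|$ stays bounded, each remainder is $\le\text{const}\cdot\epsilon\,|\sigma(t)^{\alpha}-s^{\alpha}|$, which is exactly the estimate \eqref{eq:nova:der} required for the first product formula. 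The second form follows immediately by exchanging the roles of $f$ and $g$ and using commutativity $fg=gf$. The main obstacle is precisely this bookkeeping: keeping track of which arguments ($t$, $\sigma(t)$ or $s$) appear in each factor so that the surviving remainder is controlled by continuity rather than by differentiability.

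Finally, items~3 and~4 are formal consequences. For the reciprocal, I would first check that $1/f$ is delta differentiable of order $\alpha$ at $t$: writing $(1/f)^{\alpha}(\sigma(t))-(1/f)^{\alpha}(s)=-\bigl(f^{\alpha}(\sigma(t))-f^{\alpha}(s)\bigr)/\bigl(f^{\alpha}(\sigma(t))f^{\alpha}(s)\bigr)$, inserting the expansion of the numerator from \eqref{eq:nova:der}, and using $f(t)f(\sigma(t))\ne0$ with continuity to keep the denominators bounded away from zero, one obtains the candidate $-f^{\Delta^{\alpha}}(t)/\bigl(f^{\alpha}(\sigma(t))f^{\alpha}(t)\bigr)$ together with a remainder of size $\le\text{const}\cdot\epsilon\,|\sigma(t)^{\alpha}-s^{\alpha}|$. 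Alternatively, once differentiability is known, the same formula drops out of the product rule applied to $f\cdot(1/f)\equiv1$ together with Example~\ref{example3}.

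For item~4 I would write $f/g=f\cdot(1/g)$ and apply the \emph{second} form of the product rule from item~2 together with the reciprocal formula just proved. Placing the two resulting fractions over the common denominator $g^{\alpha}(\sigma(t))g^{\alpha}(t)$ collapses the numerator to $f^{\Delta^{\alpha}}(t)g^{\alpha}(t)-f^{\alpha}(t)g^{\Delta^{\alpha}}(t)$, which is the claimed quotient rule. I note that choosing the second rather than the first form of the product rule is what produces $g^{\alpha}(t)$ and $f^{\alpha}(t)$ in the numerator (instead of their shifts), so this choice is the only subtle point in an otherwise routine computation.
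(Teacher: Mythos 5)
Your proposal is correct and follows the same overall strategy as the paper: item~1 by factoring $\lambda^{\alpha}$ out of the defining inequality \eqref{eq:nova:der} with a rescaled $\epsilon$, item~2 by substituting the first-order expansions of $f^{\alpha}$ and $g^{\alpha}$ and absorbing the surviving cross term by continuity, item~3 via the product rule applied to $f\cdot\frac{1}{f}\equiv 1$ together with Example~\ref{example3}, and item~4 via $\frac{f}{g}=f\cdot\frac{1}{g}$. Two differences are worth recording. First, in the product rule you attach $g^{\alpha}(s)$ to the $f$-increment and target the form $f^{\Delta^{\alpha}}(t)g^{\alpha}(t)+f^{\alpha}(\sigma(t))g^{\Delta^{\alpha}}(t)$, controlling $g^{\alpha}(s)-g^{\alpha}(t)$ by continuity of $g$; the paper uses the mirror decomposition targeting $f^{\alpha}(t)g^{\Delta^{\alpha}}(t)+f^{\Delta^{\alpha}}(t)g^{\alpha}(\sigma(t))$ and invokes continuity of $f$ instead. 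The two are equivalent up to interchanging $f$ and $g$, and both then obtain the second product formula by symmetry. Second, and more substantively, the paper's proof of the reciprocal rule applies item~2 to the pair $f$ and $\frac{1}{f}$, which presupposes that $\frac{1}{f}$ is already delta differentiable of order $\alpha$ at $t$ --- exactly the assertion of item~3; your direct $\epsilon$-estimate on $(1/f)^{\alpha}(\sigma(t))-(1/f)^{\alpha}(s)$, with the denominator kept away from zero by $f(t)f(\sigma(t))\neq 0$ and continuity, establishes that differentiability first and so closes a gap the paper leaves implicit. Your caveat about fixing a branch of the power map so that $(fg)^{\alpha}=f^{\alpha}g^{\alpha}$ is also well taken: the paper uses this multiplicativity silently, and for negative values and noninteger $\alpha$ it is exactly the point where the complex-valued nature of the derivative enters.
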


\begin{proof}
\begin{enumerate}
\item Let $\epsilon\in(0,1)$. Define
$\epsilon^{\ast}=\frac{\epsilon}{|\lambda|^{\alpha}}\in(0,1)$. Then
there exists a neighborhood $U$ of $t$ such that
$|f^{\alpha}(\sigma(t))-f^{\alpha}(s)-f^{\Delta^{\alpha}}(t)(\sigma^{\alpha}(t)
-s^{\alpha})|\leq\epsilon^{\ast} |\sigma^{\alpha}(t)-s^{\alpha}|$
for all $s\in U $. It follows that
\begin{equation*}
\begin{split}
|(\lambda f)^{\alpha}&(\sigma(t))
-(\lambda f)^{\alpha}(s)-\lambda f^{\Delta^{\alpha}}(t)
(\sigma^{\alpha}(t)-s^{\alpha})|\\
&= |\lambda|^{\alpha}\mid f^{\alpha}(\sigma(t))-f^{\alpha}(s)
-f^{\Delta^{\alpha}}(t)(\sigma^{\alpha}(t)-s^{\alpha})\mid  \\
&\leq \epsilon^{\ast}|\lambda|^{\alpha}|\sigma^{\alpha}(t)-s^{\alpha}|
\leq \frac{\epsilon}{|\lambda|^{\alpha}}|\lambda|^{\alpha}|
\sigma^{\alpha}(t)-s^{\alpha}|
= \epsilon|\sigma^{\alpha}(t)-s^{\alpha}|
\end{split}
\end{equation*}
for all $s\in U$. Thus $(\lambda f)^{\Delta^{\alpha}}(t)
=\lambda^{\alpha}f^{\Delta^{\alpha}}(t)$ holds.

\item Let $\epsilon\in (0,1)$. Define $\epsilon^{\ast}
=\epsilon[1+|f^{\alpha}(t)|+|g^{\alpha}(\sigma(t))|
+|g^{\Delta^{\alpha}}(\sigma(t))| ]^{-1}$. Then
$\epsilon^{\ast}\in(0,1)$ and there exist neighborhoods
$U_{1},U_{2}$ and $U_{3}$ of $t$ such that
$$
|f^{\alpha}(\sigma(t))-f^{\alpha}(s)-f^{\Delta^{\alpha}}(t)(\sigma^{\alpha}(t)
-s^{\alpha})|\leq\epsilon^{\ast} |\sigma^{\alpha}(t)-s^{\alpha}|
$$
for all $s\in U_{1}$,
$|g^{\alpha}(\sigma(t))-g^{\alpha}(s)-g^{\Delta^{\alpha}}(t)(\sigma^{\alpha}(t)
-s^{\alpha})|\leq\epsilon^{\ast} |\sigma^{\alpha}(t)-s^{\alpha}|$
for all $s\in U_{2}$ and such as $f$ is continuous. Then
$|f(t)-f(s)|\leq\epsilon^{\ast}$ for all $s\in U_{3}$.
Define $U=U_{1}\cap U_{2}\cap U_{3}$ and let $s\in U$. It follows that
\begin{equation*}
\begin{split}
|&(fg)^{\alpha}(\sigma(t))-(fg)^{\alpha}(s)
-[g^{\Delta^{\alpha}}(t)f^{\alpha}(t)
+ g^{\alpha}(\sigma(t))f^{\Delta^{\alpha}}(t)][\sigma^{\alpha}(t)-s^{\alpha}]|\\
&= |[f^{\alpha}(\sigma(t))-f^{\alpha}(s)
-f^{\Delta^{\alpha}}(t)(\sigma^{\alpha}(t)-s^{\alpha})](g^{\alpha}(\sigma(t)))
+g^{\alpha}(\sigma(t))f^{\alpha}(s) \\
&\quad + g^{\alpha}(\sigma(t))
f^{\Delta^{\alpha}}(t)(\sigma^{\alpha}(t)
-s^{\alpha})-f^{\alpha}(s)g^{\alpha}(s)\\
&\quad -[g^{\Delta^{\alpha}}(t)f^{\alpha}(t)
+g^{\alpha}(\sigma(t))f^{\Delta^{\alpha}}(t)] [\sigma^{\alpha}(t)-s^{\alpha} ]|\\
&= |[f^{\alpha}(\sigma(t))-f^{\alpha}(s)
-f^{\Delta^{\alpha}}(t)(\sigma^{\alpha}(t)-s^{\alpha})](g^{\alpha}(\sigma(t)))\\
&\quad +[g^{\alpha}(\sigma(t))-g^{\alpha}(s)-g^{\Delta^{\alpha}}(t)(\sigma^{\alpha}(t)
-s^{\alpha})](f^{\alpha}(t))\\
&\quad +[g^{\alpha}(\sigma(t))-g^{\alpha}(s)
-g^{\Delta^{\alpha}}(t)(\sigma^{\alpha}(t)-s^{\alpha})](f^{\alpha}(s)
-f^{\alpha}(t))+ f^{\alpha}(s)g^{\alpha}(s)\\
&\quad +g^{\Delta^{\alpha}}(t) f^{\alpha}(s)(\sigma^{\alpha}(t)-s^{\alpha})
+ g^{\alpha}(\sigma(t))f^{\Delta^{\alpha}}(t)(\sigma^{\alpha}(t)-s^{\alpha})
-g^{\alpha}(s)f^{\alpha}(s)\\
&\quad + g^{\Delta^{\alpha}}(t)
f^{\alpha}(s)(\sigma^{\alpha}(t)-s^{\alpha})
+ g^{\alpha}(\sigma(t))f^{\Delta^{\alpha}}(t)(\sigma^{\alpha}(t)-s^{\alpha})
-f^{\alpha}(s)g^{\alpha}(s)\\
&\quad - [g^{\Delta^{\alpha}}(t)f^{\alpha}(t)
+g^{\alpha}(\sigma(t))f^{\Delta^{\alpha}}(t)][\sigma^{\alpha}(t)-s^{\alpha}]|\\
&\leq |f^{\alpha}(\sigma(t))-f^{\alpha}(s)-f^{\Delta^{\alpha}}(t)
(\sigma^{\alpha}(t)-s^{\alpha})| |(g^{\alpha}(\sigma(t)))|\\
&\quad + |g^{\alpha}(\sigma(t))-g^{\alpha}(s)-g^{\Delta^{\alpha}}(t)(\sigma^{\alpha}(t)
-s^{\alpha})||(f^{\alpha}(t))|\\
&\quad +|g^{\alpha}(\sigma(t))-g^{\alpha}(s)
-g^{\Delta^{\alpha}}(t)(\sigma^{\alpha}(t)-s^{\alpha})||f^{\alpha}(s)-f^{\alpha}(t)|\\
&\quad +|g^{\Delta^{\alpha}}(t)||f^{\alpha}(t)-f^{\alpha}(s)||\sigma^{\alpha}(t)-s^{\alpha}|\\
&= \epsilon^{\ast}|(g^{\alpha}(\sigma(t)))||\sigma^{\alpha}(t)-s^{\alpha}|\\
&\quad +\epsilon^{\ast}|(f^{\alpha}(t))||\sigma^{\alpha}(t)-s^{\alpha}|
+\epsilon^{\ast}|\sigma^{\alpha}(t)-s^{\alpha} |\epsilon^{\ast}
+\epsilon^{\ast}|g^{\Delta^{\alpha}}(t)||\sigma^{\alpha}(t)-s^{\alpha}|\\
&\leq \epsilon^{\ast}|\sigma^{\alpha}(t)-s^{\alpha} |(\epsilon^{\ast}
+|(f^{\alpha}(t))|+|g^{\Delta^{\alpha}}(t)|+|g^{\Delta^{\alpha}}(t)|)\\
&\leq \epsilon^{\ast}|\sigma^{\alpha}(t)-s^{\alpha}|
(1+|(f^{\alpha}(t))|+|g^{\Delta^{\alpha}}(t)|+|g^{\Delta^{\alpha}}(t)|)
= \epsilon |\sigma^{\alpha}(t)-s^{\alpha}|.
\end{split}
\end{equation*}
Thus $(fg)^{\Delta^{\alpha}}(t)=f^{\alpha}(t)g^{\Delta^{\alpha}}(t)
+f^{\Delta^{\alpha}}(t)g^{\alpha}(\sigma(t))$ holds at $t$.
The other product rule follows from this last
equality by interchanging functions $f$ and $g$.

\item We use the delta derivative of a constant (Example~\ref{example3}).
Since $\left(f \cdot \frac{1}{f}\right)^{\Delta^{\alpha}}(t)=0$,
it follows from item 2 that
$\left(\frac{1}{f}\right)^{\Delta^{\alpha}}(t)
f^{\alpha}(\sigma(t))+f^{\Delta^{\alpha}}(t)\frac{1}{f^{\alpha}(t)}=0$.
Because we are assuming $f(t)f(\sigma(t))\neq 0$, one has
$\left(\frac{1}{f}\right)^{\Delta^{\alpha}}(t)
=\frac{-f^{\Delta^{\alpha}}(t)}{f^{\alpha}(\sigma(t))f^{\alpha}(t)}$.

\item For the quotient formula we use items 2 and 3 to compute
\begin{equation*}
\begin{split}
\left(\frac{f}{g}\right)^{\Delta^{\alpha}}(t)
&=\left(f \cdot \frac{1}{g}\right)^{\Delta^{\alpha}}(t)
=f^{\alpha}(t)\left(\frac{1}{g}\right)^{\Delta^{\alpha}}(t)
+f^{\Delta^{\alpha}}(t)\frac{1}{g^{\alpha}(\sigma(t))}\\
&=-f^{\alpha}(t)\frac{g^{\Delta^{\alpha}}(t)}{g^{\alpha}(\sigma(t))
g^{\alpha}(t)}+f^{\Delta^{\alpha}}(t)
\frac{1}{g^{\alpha}(\sigma(t))}\\
&=\frac{f^{\Delta^{\alpha}}(t)g^{\alpha}(t)
-f^{\alpha}(t)g^{\Delta^{\alpha}}(t)}{g^{\alpha}(\sigma(t))g^{\alpha}(t)}.
\end{split}
\end{equation*}
This concludes the proof.
\end{enumerate}
\end{proof}

\begin{remark}
The delta derivative of order $\alpha$
of the sum $f+g:\mathbb{T}\rightarrow \mathbb{R}$
does not satisfy the usual property, that is, in general
$(f+g)^{\Delta^{\alpha}}(t)
\neq(f)^{\Delta^{\alpha}}(t)+(g)^{\Delta^{\alpha}}(t)$.
For instance, let $\mathbb{T}$ be an arbitrary time scale and
$f,g$ be functions defined by $f:\mathbb{T}\rightarrow \mathbb{R}$,
$t\mapsto t$, and $g:\mathbb{T}\rightarrow \mathbb{R}$,
$t\mapsto 2t$. One can easily find that
$(f+g)^{\Delta^{\alpha}}(t)=\sqrt{3}
\neq f^{\Delta^{\alpha}}(t)+g^{\Delta^{\alpha}}(t)
=1+\sqrt{2}$.
\end{remark}

\begin{proposition}
\label{prop}
Let $\alpha\in\mathbb{R}$ and $m\in \mathbb{N}$, $m>1$.
For $g$ defined by $g(t)=t^{m}$ we have
\begin{equation}
\label{eq}
g^{\Delta^{\alpha}}(t)
=\sum_{k=0}^{m-1}(t^{\alpha})^{m-k-1}(\sigma^{\alpha})^{k}(t).
\end{equation}
\end{proposition}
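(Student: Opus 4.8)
The plan is to split the computation into the two cases supplied by Theorem~\ref{th1}, according to whether $t$ is right-scattered or right-dense, and in each case to reduce everything to the elementary factorization of $a^{m}-b^{m}$.

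First I would treat the right-scattered case $\sigma(t)>t$, where item~\ref{2} of Theorem~\ref{th1} applies. Using $g^{\alpha}(t)=(t^{m})^{\alpha}=(t^{\alpha})^{m}$ and, likewise, $g^{\alpha}(\sigma(t))=(\sigma^{\alpha}(t))^{m}$, formula \eqref{eq1} becomes
\[
g^{\Delta^{\alpha}}(t)
=\frac{g^{\alpha}(\sigma(t))-g^{\alpha}(t)}{\sigma^{\alpha}(t)-t^{\alpha}}
=\frac{(\sigma^{\alpha}(t))^{m}-(t^{\alpha})^{m}}{\sigma^{\alpha}(t)-t^{\alpha}}.
\]
Setting $a=\sigma^{\alpha}(t)$ and $b=t^{\alpha}$, the identity $a^{m}-b^{m}=(a-b)\sum_{k=0}^{m-1}a^{k}b^{m-1-k}$ cancels the denominator and yields precisely $\sum_{k=0}^{m-1}(t^{\alpha})^{m-k-1}(\sigma^{\alpha})^{k}(t)$, which is \eqref{eq}.

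Next I would handle the right-dense case $\sigma(t)=t$, where \eqref{eq2} gives $g^{\Delta^{\alpha}}(t)=\lim_{s\to t}\frac{(t^{\alpha})^{m}-(s^{\alpha})^{m}}{t^{\alpha}-s^{\alpha}}$. The same factorization rewrites the quotient as $\sum_{k=0}^{m-1}(t^{\alpha})^{m-k-1}(s^{\alpha})^{k}$; as $s\to t$ each summand tends to $(t^{\alpha})^{m-1}$, so the limit equals $m(t^{\alpha})^{m-1}$. On the other hand, when $\sigma(t)=t$ the right-hand side of \eqref{eq} collapses to $\sum_{k=0}^{m-1}(t^{\alpha})^{m-k-1}(t^{\alpha})^{k}=m(t^{\alpha})^{m-1}$, so both sides agree and \eqref{eq} holds.

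Since the computations are short, there is no serious obstacle; the only step needing a little care is the right-dense limit, where one must note that the finite sum $\sum_{k}(t^{\alpha})^{m-k-1}(s^{\alpha})^{k}$ passes to the limit termwise (using continuity of $s\mapsto s^{\alpha}$) and degenerates to $m(t^{\alpha})^{m-1}$, matching the degenerate form of \eqref{eq}. An alternative I would keep in reserve is induction on $m$: the base case $m=2$ is Example~\ref{ex1:3}, and the inductive step writes $t^{m}=t\cdot t^{m-1}$, applies the product rule of Theorem~\ref{th2} together with $(\,\cdot\,)^{\Delta^{\alpha}}\equiv1$ from Example~\ref{ex1:1}, and then a reindexing $k\mapsto k+1$ of the sum reproduces \eqref{eq}.
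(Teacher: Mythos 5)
Your proof is correct, but your main line of argument is genuinely different from the paper's. The paper proves \eqref{eq} by induction on $m$: the base case $m=2$ is Example~\ref{ex1:3}, and the inductive step writes $t^{m+1}=t\cdot t^{m}$ and applies the product rule of Theorem~\ref{th2} together with $t^{\Delta^{\alpha}}\equiv 1$ from Example~\ref{ex1:1}, finishing with exactly the reindexing $k\mapsto k+1$ you sketch --- so the alternative you ``keep in reserve'' is precisely the published proof. Your primary route instead goes directly through Theorem~\ref{th1}: at a right-scattered point you apply \eqref{eq1} and cancel via $a^{m}-b^{m}=(a-b)\sum_{k=0}^{m-1}a^{k}b^{m-1-k}$ with $a=\sigma^{\alpha}(t)$, $b=t^{\alpha}$, and at a right-dense point you evaluate the limit \eqref{eq2} termwise to get $m(t^{\alpha})^{m-1}$, which matches the degenerate form of the right-hand side. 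The direct computation is arguably more economical: it bypasses Theorem~\ref{th2} (whose product rule is the most delicate result in the paper and carries a continuity hypothesis) and makes the right-dense degeneration explicit, which the paper's induction leaves implicit. The induction, on the other hand, showcases how the product rule propagates power formulas, in the style usual in time-scale calculus. One shared caveat: both arguments silently use $(t^{m})^{\alpha}=(t^{\alpha})^{m}$, which for $t<0$ and noninteger $\alpha$ depends on the branch of the complex power chosen; since the paper uses the same identity throughout (e.g.\ in Example~\ref{ex1:3}), this is consistent with its conventions rather than a gap in your argument.
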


\begin{proof}
We prove the formula by induction. If $m=2$, then $g(t)=t^{2}$
and from Example~\ref{ex1:3} we know that
$g^{\Delta^{\alpha}}(t)
=\sum_{k=0}^{1}(t^{\alpha})^{1-k}(\sigma^{\alpha})^{k}(t)
=t^\alpha+\sigma^\alpha(t)$. Now assume
$$
g^{\Delta^{\alpha}}(t)
=\sum_{k=0}^{m-1}(t^{\alpha})^{m-k-1}(\sigma^{\alpha})^{k}(t)
$$
holds for $g(t)=t^{m}$ and let $G(t)=t^{m+1}=t \cdot g(t)$.
We use the product rule of Theorem~\ref{th2} to obtain
\begin{equation*}
\begin{split}
G^{\Delta^{\alpha}}(t)
&=g^{\alpha}(t)+\sigma^{\alpha}(t)g^{\Delta^{\alpha}}(t)
=(t^{\alpha})^{m}+\sigma^{\alpha}(t)\sum_{k=0}^{m-1}
(t^{\alpha})^{m-k-1}(\sigma^{\alpha})^{k}(t)\\
&=(t^{\alpha})^{m}+\sum_{k=0}^{m-1}(t^{\alpha})^{m-k-1}(\sigma^{\alpha})^{k+1}(t)
=(t^{\alpha})^{m}+\sum_{k=1}^{m-1}(t^{\alpha})^{m-k}(\sigma^{\alpha})^{k}(t)\\
&=\sum_{k=0}^{m}(t^{\alpha})^{m-k}(\sigma^{\alpha})^{k}(t).
\end{split}
\end{equation*}
Hence, by mathematical induction, \eqref{eq} holds.
\end{proof}

\begin{example}
Choose $m = 3$ in Proposition~\ref{prop}. Then
$\left(t^3\right)^{\Delta^\alpha} = t^{2\alpha}
+ \left(t \sigma(t)\right)^\alpha + \sigma^{2\alpha}(t)$.
\end{example}

The notion of fractional derivative here introduced
can be easily extended to any arbitrary real order $\alpha$.

\begin{definition}
Let $\alpha > 0$ and $N\in\mathbb{N}_0$ be such that $N<\alpha \leq N+1$.
Then we define $f^{\Delta^{\alpha}}=\left(f^{\Delta^N}\right)^{\Delta^{\alpha-N}}$,
where $f^{\Delta^N}$ is the usual Hilger derivative of order $N$.
\end{definition}


\begin{acknowledgement}
This research is part of first author's Ph.D. project,
which is carried out at Sidi Bel Abbes University, Algeria.
It was initiated while Bayour was visiting the Department
of Mathematics of University of Aveiro, Portugal, 2015.
The hospitality of the host institution and the financial support
of University of Chlef, Algeria, are here gratefully acknowledged.
Torres was supported by Portuguese funds through CIDMA and FCT,
within project UID/MAT/04106/2013.
\end{acknowledgement}



\end{document}